\newcommand{\Z}{\mathbb{Z}}
\theoremstyle{plain}
\newtheorem{thm}{Theorem}
\newtheorem{lem}{Lemma}[section]
\newtheorem{prop}[lem]{Proposition}
\newtheorem{rem}[lem]{Remark}
\newtheorem{ex}[lem]{Example}
\theoremstyle{definition}
\newtheorem{defn}[lem]{Definition}
\title{On upper bounds of frieze patterns}
\author{Jon Cheah}
\address{
Department of Mathematics   \\
The University of Hong Kong \\
Pokfulam Road               \\
Hong Kong}
\email{joncheah@connect.hku.hk}
\author{Antoine de Saint Germain}
\address{
Department of Mathematics and New Cornerstone Science Laboratory   \\
The University of Hong Kong \\
Pokfulam Road               \\
Hong Kong}
\email{adsg96@hku.hk}
\begin{document}
\begin{abstract}
    In this note, we show that the sequence of maximum values in frieze patterns of type $A_n$ is the sequence of Fibonacci numbers, and that of frieze patterns of type $C_n$ is the sequence of odd Fibonacci numbers.
\end{abstract}

\maketitle
\tableofcontents
\addtocontents{toc}{\protect\setcounter{tocdepth}{1}}

\section{Introduction}
    For a positive integer $n$, a (Coxeter) \emph{frieze pattern} of width $n$ is an infinite array of $(n+2)$ staggered rows 

    \begin{equation}\label{eq:Coxeter}
        \begin{array}{ccccccccccccccc}
            \cdots &   & 1 &   & 1 &   & 1 &     & \cdots &   &   &   &   &   &\\
              & \cdots &   & a_1 &   & b_1 &   & c_1 &     & \cdots &   &   &   &   &\\
              &   & \cdots &   & a_2 &   & b_2 &   & c_2 &     & \cdots &   &   &   &\\
              &   &   &   &   & \ddots &   & \ddots &   & \ddots &      &   &   &   &\\
              &   &   &   & \cdots &   & a_n &   & b_n &   & c_n &    & \cdots &   &\\
              &   &   &   &   & \cdots &   & 1 &   & 1 &   & 1 &     & \cdots &\\
        \end{array}
    \end{equation}
    consisting entirely of positive integers satisfying the so-called {\it diamond rule}: $ad = 1+bc$ whenever the positive integers $a$, $b$, $c$, and $d$ form a ``diamond'' 
    \begin{equation*}
        \begin{array}{ccc}
              & b &   \\
            a &   & d \\
              & c & 
        \end{array}
    \end{equation*}
    in the array. Frieze patterns were introduced by H. S. M. Coxeter \cite{Coxeter1971}, and further studied together with J.H. Conway \cite{CC73}. In the years following their inception, many modifications of the diamond rule were considered (see \cite[\S 5]{Sophie-M:survey} for a large list). One important generalisation was given in the mid-2000s, when P. Caldero and F. Chapoton \cite{CC06} realised frieze patterns of a given width n 
    as evaluations of distinguished ring homomorphisms  
    on all cluster variables in the cluster algebra (with trivial coefficients) of type $A_n$, thus unveiling the cluster algebra nature of frieze patterns. Motivated by this, I. Assem, C. Reutenauer and D. Smith \cite{ARS:friezes}
    introduced the following generalisation of Coxeter's frieze patterns.

    \medskip
    \begin{defn}\label{def:frieze}
        Let $A = (a_{i,j})$ be an $n\times n$ symmetrisable generalised Cartan matrix (see Definition \ref{def:cartan}). A {\it frieze pattern of type A} is a map 
    $f : \{1,\ldots , n\} \times \mathbb{Z} \longrightarrow \mathbb{Z}_{>0}$ such that
    \begin{equation*}
        f(i,m) f(i,m+1) = 1 + \prod_{j=i+1}^n f(j,m)^{-a_{j,i}} \prod_{j =  1}^{i-1} f(j,m+1)^{-a_{j,i}}, \quad \forall i, \forall m.
    \end{equation*}
    The set of frieze patterns of type A is denoted ${\rm Frieze}(A)$.
    \end{defn}
    \medskip

    When $A=(a_{i,j})$ is the so-called standard Cartan matrix of type $A_n$ (see \S \ref{ss:Cartan}), one recovers (Coxeter) frieze patterns of width $n$ by placing 
    each tuple $(f(1,m), \ldots, f(n,m)), m \in \mathbb{Z}$ in the $m^{\rm th}$ North-West to South-East diagonal in \eqref{eq:Coxeter}.  

    Remarkably, the set ${\rm Frieze}(A)$ is finite if and only if $A$ is of ``finite type", i.e. $A$ is positive definite (c.f. \cite{FZ:II,GM:finite}). In other words, frieze patterns in the sense of Definition \ref{def:frieze} exhibit a classification similar to the Cartan-Killing classification of semisimple Lie algebras over $\mathbb{C}$, or to the Fomin-Zelevinsky classification \cite{FZ:II} of cluster algebras! It is well known that such a classification can be broken down into 4 infinite families (denoted $A_n$, $B_n$, $C_n$ and $D_n$) and five exceptional types $G_2,F_4,E_6, E_7$ and $E_8$. 

    When $A$ is of finite type, it is natural to ask for the cardinality $c(A)$ of the set ${\rm Frieze}(A)$. A first step in this direction was achieved by Coxeter and Conway \cite{CC73}, who constructed a beautiful bijection from ${\rm Frieze}(A_n)$ to the set of triangulations of the regular $(n+3)$-gon, whereby $c(A_n) = C_{n+1} = \frac{1}{n+2} \binom{2n+2}{n+1}$, {\it the $(n+1)^{\rm st}$ Catalan number}. Subsequently, formulas 
    have been obtained for $c(A)$ when $A$ is of type $B_n, C_n, D_n,G_2, F_4$ and $E_6$. At the time of writing,  
    $c(E_7)$ and $c(E_8)$ are not known (but see \cite{FP:Dn,GM:finite} for conjectural values). 

    Keeping with the assumption that $A$ is of finite type, Coxeter \cite{Coxeter1971} proved in type $A_n$ and \cite{Keller:periodicity} proved in general that every $f \in {\rm Frieze}(A)$ is periodic in the second component, i.e. there exists $N_A$ such that 
    \[
        f(i,m) = f(i,m+N_A), \quad \forall i, \forall m.
    \]

    In light of the finiteness of 
    ${\rm Frieze}(A)$ and the periodicity of each $f \in {\rm Frieze}(A)$, one has the well-defined 
    number 
    \begin{equation}
        u_A := \max (f (i,m) : f \in {\rm Frieze}(A), i \in \{1,\ldots , n\}, m \in \mathbb{Z}).
    \end{equation}
    In this article, we determine $u_A$ when $A$ is of type $A_n$ and $C_n$. Our results are summarised as follows. 

    \medskip
    \begin{thm}\label{mainthm} Let $A$ be a Cartan matrix of finite type. Then, 
    \begin{enumerate}[\normalfont(1)]
        \item if $A$ is of type $A_n$, then $u_A = F_{n+2}$; and
        \item if $A$ is of type $C_n$ then $u_A = F_{2n+1}$, 
    \end{enumerate}
    where $F_n$ is the $n^{\rm th}$ Fibonacci number, defined by $F_0 = 0$, $F_1 = 1$ and $F_{k+2} = F_{k+1} + F_k$. 
    \end{thm}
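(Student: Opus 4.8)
The plan is to work with the combinatorial model for frieze patterns of type $A_n$ provided by Conway and Coxeter, namely triangulations of the regular $(n+3)$-gon. Recall that under their bijection ${\rm Frieze}(A_n) \xrightarrow{\sim} \{\text{triangulations of the } (n+3)\text{-gon}\}$, the entry $f(i,m)$ of a frieze pattern corresponds to the number of triangles incident to a given vertex of the polygon (more precisely, the entries in the first nontrivial row record, for each vertex, the number of triangles of the triangulation having that vertex as a corner, and the deeper rows record ratios of matching numbers / continuant determinants of consecutive runs of these). Thus to compute $u_{A_n}$ it suffices to understand how large the entries of the associated ``quiddity sequence'' and its continuants can be. First I would recall the description of a general entry $f(i,m)$ as a continuant $K(a_k, a_{k+1}, \ldots, a_{k+\ell})$ in consecutive terms of the quiddity sequence $(a_1, \ldots, a_{n+3})$ of the triangulation, where each $a_j \geq 1$ and $\sum a_j = 3(n+3) - 2(n+3) + \ldots$ — in any case the quiddity sequence of a triangulated $(n+3)$-gon satisfies $\sum_{j} a_j = 4(n+3) - 6 = 4n + 6$ (each triangle contributes $3$ to the total, there are $n+1$ triangles, but I would double-check this normalization against the excerpt's conventions).

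Next I would set up an optimization: among all quiddity sequences of triangulated $(n+3)$-gons, and among all windows of consecutive entries, maximize the continuant. The key structural fact I would establish is that continuants are maximized by making the entries of the window as ``spread out as possible'' — concretely, a window consisting of entries all equal to $1$ except possibly at the ends behaves very differently from an alternating pattern, and the relevant extremal configuration is the ``fan'' or ``zigzag'' triangulation. I would prove, by induction on the length of the window using the defining recursion $K(a_1, \ldots, a_\ell) = a_\ell K(a_1, \ldots, a_{\ell-1}) - K(a_1, \ldots, a_{\ell-2})$ (or the $+$ version depending on sign conventions in the paper — here the matrix-product form $\prod \begin{pmatrix} a_j & 1 \\ 1 & 0 \end{pmatrix}$ is cleanest), that the continuant over a fixed number of slots with fixed total sum is largest when the sequence is $2, 2, \ldots, 2$ versus $1, \ldots, 1, k$; a short lemma comparing $K(\ldots, a, b, \ldots)$ with $K(\ldots, a-1, b+1, \ldots)$ will pin down the direction of monotonicity. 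Matching this against the constraint $\sum a_j = 4n+6$ over $n+3$ cyclic slots (so the ``average'' entry is slightly under $2$, consistent with a near-alternating quiddity sequence), one finds that the maximal continuant is realized by the triangulation whose quiddity sequence is as close to constant $2$ as the polygon allows — which is precisely the triangulation producing the entry $F_{n+2}$ — and a direct computation $K(2,2,\ldots,2)$ ($k$ twos) $= F_{2k+2}/\ldots$, or more simply the well-known identity that the continuant of a string of $1$'s and $2$'s in the ``maximal'' arrangement gives consecutive Fibonacci numbers, yields $u_{A_n} = F_{n+2}$.

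For type $C_n$ I would invoke the folding/quotient relationship between frieze patterns of type $C_n$ and a symmetry class of frieze patterns of type $A_{2n-1}$ (or $A_{2n}$, again to be checked against the paper's normalization of the $C_n$ Cartan matrix): a frieze pattern of type $C_n$ lifts to a $\sigma$-invariant frieze pattern of type $A$ for the order-$2$ Dynkin diagram automorphism $\sigma$, and the $C_n$-entries are obtained from the $A$-entries of the fixed locus. Concretely the $C_n$ friezes correspond to \emph{centrally symmetric} triangulations of a regular $(2n+2)$-gon (this is the standard Conway--Coxeter-type model for $C_n$, sometimes phrased via triangulations of a once-punctured disk), and the same continuant optimization — now subject to the central-symmetry constraint on the quiddity sequence, which forces a window of length up to $\sim n$ of near-constant-$2$ entries together with a boundary contribution — produces continuants equal to \emph{odd-indexed} Fibonacci numbers $F_{2n+1}$, since the symmetry effectively doubles the relevant index while the boundary constraint shifts it by one. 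I would close the $C_n$ case by exhibiting the explicit centrally-symmetric triangulation achieving $F_{2n+1}$ and verifying the matching upper bound from the same monotonicity lemma applied in the symmetric setting.

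The main obstacle I expect is the upper bound direction, specifically proving that no quiddity sequence can do better than the near-constant-$2$ one: the constraint is not simply ``fixed sum'' but ``is the quiddity sequence of an actual triangulation'' (equivalently, the cyclic sequence of positive integers whose associated $SL_2$ matrix product is $\pm\,\mathrm{Id}$, with the Conway--Coxeter positivity), and one must rule out that some clever mix of large and small entries — e.g. a single large entry $a_j$ flanked by $1$'s — produces a bigger continuant over some window. Handling the cyclic/window aspect (the maximizing window need not avoid the ``large'' entries) and making the exchange lemma respect the triangulation constraint, rather than just the sum constraint, is the delicate part; I would address it by reducing to the fact that any triangulation can be transformed to the fan/zigzag one by a sequence of flips, each of which can only increase the maximal continuant, via a local flip-vs-continuant inequality. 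For $C_n$ the analogous flip argument must be carried out equivariantly with respect to the central symmetry, which is where the parity (odd Fibonacci) ultimately comes from.
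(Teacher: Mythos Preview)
Your $C_n$ strategy is essentially the paper's: fold to $A_{2n-1}$, identify $C_n$-friezes with centrally symmetric triangulations of the $(2n+2)$-gon, and exhibit the zig-zag triangulation as the extremal one. One simplification you miss is that the upper bound $u_{C_n}\le u_{A_{2n-1}}$ is \emph{immediate} from folding (the $C_n$ entries literally are $A_{2n-1}$ entries), so there is no need to rerun any optimization under a symmetry constraint; only the lower bound requires checking that the zig-zag triangulation is centrally symmetric.

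For $A_n$, however, your proposal has a genuine gap in the upper bound. You correctly identify the delicate point yourself: the continuant exchange lemma $K(\ldots,a,b,\ldots)$ versus $K(\ldots,a-1,b+1,\ldots)$ compares sequences with the same sum, but ``same sum'' is far from ``is a quiddity sequence of a triangulation'', and your proposed fix --- find a sequence of flips towards the zig-zag along which the maximal continuant is monotone nondecreasing --- is asserted, not proved. It is not true that \emph{every} flip increases the maximum entry (flipping away from the zig-zag must decrease it), so you would need to show that from any non-zig-zag triangulation some flip weakly increases the maximum, and this local statement is not obviously easier than the global one. You also conflate the fan and zig-zag triangulations; they are different, and only the zig-zag attains $F_{n+2}$ (the fan attains $n+1$).

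The paper bypasses continuants entirely. It uses the elementary characterization that an $n$-tuple $(a_1,\ldots,a_n)$ of positive integers (with $a_0=a_{n+1}=1$) is a frieze diagonal iff $a_i\mid a_{i-1}+a_{i+1}$ for all $i$, and proves the bound by induction on $n$: if $a_n\neq 1$, a short convexity argument produces an index with $a_i=a_{i-1}+a_{i+1}$, whence deleting $a_i$ yields an $(n-1)$-tuple still satisfying the divisibility condition; two such deletions give $a_i=a_{i-1}+a_{i+1}\le F_{n+1}+F_n=F_{n+2}$. This avoids both the window/continuant bookkeeping and any flip monotonicity, at the cost of invoking the (classical) diagonal-divisibility criterion.
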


    \medskip

    We point out that G. Muller \cite{muller:finite} has recently obtained a (very large but explicit) upper bound for $u_A$, using only elementary properties of Cartan matrices. 
    
    Statement (1) of Theorem 1 and its proof have been formally verified by the Lean4 proof assistant. The formalisation project is available at \url{https://antoine-dsg.github.io/frieze_patterns}.

    The article is structured as follows. In \S \ref{ss:Cartan}, we recall the definitions of Cartan matrices and folding. We prove statement (1) of Theorem \ref{mainthm} in \S \ref{ss:proof1} and statement (2) of Theorem \ref{mainthm} in \S \ref{ss:proof2}.

\section{Proof of Theorem \ref{mainthm}}\label{s:proof1}
Throughout this section, we let $n$ be a fixed positive integer. 
\subsection{Cartan matrices and folding}\label{ss:Cartan}
\begin{defn}\label{def:cartan}
    A {\it symmetrisable generalised Cartan matrix} is an $n \times n$ integral matrix 
$A = (a_{i, j})_{i, j = 1, \ldots, n}$ such that 

\begin{enumerate}
    \item $a_{i ,i} = 2$ for all $i \in \{1, \ldots, n\}$,
    \item $a_{i, j} \leq 0$ for all $i, j \in \{1, \ldots, n\}$ such that $i \neq j$, with $a_{i,j} = 0$ if and only if $a_{j,i}=0$, and
    \item there exists a diagonal matrix $D$ with positive integers on the diagonal such that $DA$ is symmetric.
\end{enumerate}
A symmetrisable generalised Cartan matrix is said to be of {\it finite type} if it is positive definite. If there exists a simultaneous permutation of rows and columns of $A$ such that $A$ is block diagonal, we say that $A$ is {\it decomposable}. If not, we say that $A$ is {\it indecomposable}. 
\end{defn}
It is well-known that indecomposable Cartan matrices of finite type are classified into four classical series $A_n, B_n, C_n, D_n$ and five exceptional types labeled as $G_2, F_4, E_6, E_7, E_8$. For example, $G_2 = \left(\begin{array}{cc} 2 & -3 \\ -1 & 2\end{array}\right)$. In this article, the {\it standard Cartan matrix of type $A_n$} is the $n\times n$ integer matrix
\begin{equation}\label{eq:Cartan-An}
A_n = \left(\begin{array}{cccccc} 2 & -1 & 0 & \cdots & 0 & 0\\
-1 & 2 & -1 & \cdots & 0 & 0 \\
0 & -1 & 2 & \cdots & 0 & 0\\
\vdots & \vdots & \vdots& \ddots & \vdots & \vdots\\
0 & 0 & 0 & \cdots & 2 & -1\\
0 & 0 & 0& \cdots & -1 & 2\end{array}\right).
\end{equation}
Similarly, the {\it standard Cartan matrix of type $C_n$} is the $n\times n$ integer matrix
\begin{equation}\label{eq:Cartan-Cn}
     C_n = \left(\begin{array}{cccccc} 2 & -1 & 0 & \cdots & 0 & 0\\
-1 & 2 & -1 & \cdots & 0 & 0 \\
0 & -1 & 2 & \cdots & 0 & 0\\
\vdots & \vdots & \vdots& \ddots & \vdots & \vdots\\
0 & 0 & 0 & \cdots & 2 & -2\\
0 & 0 & 0& \cdots & -1 & 2\end{array}\right).
\end{equation}

It is well-known (see \cite{FP:Dn}) that 
$C_n$ is a ``folding" of type $A_{2n-1}$ via a certain automorphism $\sigma$. This allows us to reduce questions about $C_n$ to 
``$\sigma$-invariant" questions about $A_{2n-1}$. The precise meaning of ``$\sigma$-invariant" depends on the question; a precise formulation of ``$\sigma$-invariant" frieze patterns will be given in \S \ref{ss:proof2}.

\subsection{The case of type $A_n$}\label{ss:proof1}
    The following definition is a special case of Definition \ref{def:frieze} in type $A_n$.  
\begin{defn}\label{def:friezeAn}
    A \emph{frieze pattern of type $A_n$} is a map $f:\{1,\dots,n\}\times \Z \to \Z_{>0}$ such that for all $i\in \{1,\dots,n\}$ and all $m\in \Z$, we have
    \begin{equation}\label{eqn:diamond}
        f(i,m)f(i,m+1) = 1 + f(i+1,m)f(i-1,m+1),
    \end{equation}
    where by convention $f(0,m)=f(n+1,m):=1$ for all $m\in \Z$.
\end{defn}

In this subsection, a frieze pattern will always mean a frieze pattern of type $A_n$. 

\begin{ex}\label{ex:friezeA5}
    The following array describes a frieze pattern of type $A_5$.
    \[
\begin{matrix}
    1 && 1 && 1 && 1 && 1 && 1 &&   \\
    & 2 && 3 && 3 && 1 && 2 && 3 &&  \\
    && 5 && 8 && 2 && 1 && 5 && 8 &&  && \\
    &\cdots && 13 && 5 && 1 && 2 && 13 && 5 && \cdots  \\
    &&&& 8 && 2 && 1 && 5 && 8 && 2 &&   \\
    &&&&& 3 && 1 && 2 && 3 && 3 && 1 &&   \\
    &&&&&&1 && 1 && 1 && 1 && 1 && 1 && 
\end{matrix}
\]    
\end{ex}
    For each $m\in \Z$, the $n$-tuple $(f(1,m),\dots, f(n,m))$ is called the $m$th \emph{diagonal} of $f$. Conversely, an $n$-tuple of positive integers $(a_1, \ldots , a_n)$ is called a {\it diagonal of a frieze pattern} if there exists a frieze pattern $f$ and an $m \in \mathbb{Z}$ such that $a_i = f (i,m)$ for all $i \in \{1,\ldots, n\}$. By recursively solving (\ref{eqn:diamond}) with respect to the (lexicographical) total order on $\{1,\dots,n\}\times \Z$, a frieze pattern is uniquely determined by any one of its diagonals. The following criterion is well-known.

\begin{lem}[\protect{\cite[(24)]{CC73}}]\label{friezeFlute}
    An $n$-tuple $(a_1, a_2,\dots,a_n)$ of positive integers is a diagonal of a frieze pattern if and only if 
    \begin{equation}\label{cond:flute}\tag{$\ast$}
        \text{for all } i \in \{1,\dots,n\},\quad a_i \text{ divides } a_{i-1} + a_{i+1},
    \end{equation}
    where by convention $a_0=a_{n+1}=1$.
\end{lem}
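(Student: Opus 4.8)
The plan is to prove both implications by elementary modular arithmetic on the diamond relation \eqref{eqn:diamond}, keeping throughout the conventions $f(0,m)=f(n+1,m)=1$ and $a_0=a_{n+1}=1$.

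For the ``only if'' direction, suppose $a_i=f(i,m)$ for a frieze pattern $f$ and fix $i\in\{1,\dots,n\}$; all congruences below are taken modulo $a_i=f(i,m)$. Reducing \eqref{eqn:diamond} at $(i,m)$ gives $f(i-1,m+1)f(i+1,m)\equiv-1$, so in particular $\gcd(f(i-1,m+1),a_i)=1$; reducing \eqref{eqn:diamond} at $(i-1,m)$ (and using $f(0,\cdot)=1$ when $i=1$) gives $f(i-1,m)f(i-1,m+1)\equiv 1$. Combining, $f(i-1,m)\equiv f(i-1,m+1)^{-1}\equiv-f(i+1,m)$, i.e. $a_i\mid a_{i-1}+a_{i+1}$. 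I expect this direction to be short once these two congruences are isolated.

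For the ``if'' direction, suppose $(a_1,\dots,a_n)$ satisfies \eqref{cond:flute}. I would first record that \eqref{cond:flute} forces $\gcd(a_i,a_{i+1})=1$ for $0\le i\le n$: any common divisor of $a_i$ and $a_{i+1}$ divides $a_{i-1}$ by \eqref{cond:flute} at $i$, and hence divides $a_0=1$ by descent. Solving \eqref{eqn:diamond} recursively (as recalled before the statement) yields a map $f\colon\{1,\dots,n\}\times\Z\to\mathbb{Q}_{>0}$ with $f(i,0)=a_i$ satisfying \eqref{eqn:diamond}, positivity being automatic since each solved entry has the shape $(1+\text{positive})/\text{positive}$; the real content is that $f$ is $\Z$-valued. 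I would prove, by induction on $m\ge 0$, that every diagonal $(b_1,\dots,b_n)=(f(1,m),\dots,f(n,m))$ consists of positive integers and again satisfies \eqref{cond:flute}; the case $m\le 0$ then follows by applying the same argument to the reflected solution $g(i,m):=f(n+1-i,-m)$ of \eqref{eqn:diamond}, whose $0$th diagonal is the reverse of $(a_1,\dots,a_n)$ and hence also satisfies \eqref{cond:flute}.

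In the inductive step, writing $b_i'=f(i,m+1)$, integrality of the $b_i'$ is proved by a nested induction on $i$: \eqref{eqn:diamond} at $(i-1,m)$ gives $b_{i-1}b_{i-1}'\equiv 1\pmod{b_i}$, so $b_{i-1}'\equiv b_{i-1}^{-1}\pmod{b_i}$ by coprimality, and then $b_{i+1}b_{i-1}'\equiv b_{i+1}b_{i-1}^{-1}\equiv-1\pmod{b_i}$ by \eqref{cond:flute} at $i$ for the $m$th diagonal; since $b_ib_i'=1+b_{i+1}b_{i-1}'$, this shows $b_i'\in\Z_{>0}$. That the new diagonal again satisfies \eqref{cond:flute} is then exactly the computation of the ``only if'' direction applied to \eqref{eqn:diamond} at $(i,m)$ and at $(i+1,m)$ read modulo $b_i'$: these give $b_{i+1}b_{i-1}'\equiv-1$ and $b_{i+1}b_{i+1}'\equiv1\pmod{b_i'}$, whence $b_{i-1}'+b_{i+1}'\equiv0\pmod{b_i'}$. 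Throughout, the boundary indices $i\in\{1,n\}$ are disposed of separately but trivially using the conventions (a putative instance of \eqref{eqn:diamond} at $(0,m)$ collapsing to $1\equiv1$, and similarly at the top end). The crux — and the part I expect to require the most care — is recognizing that \eqref{cond:flute} propagates to the adjacent diagonal at all; the engine that makes this go through is the coprimality of consecutive entries, which converts the divisibility conditions into identities between modular inverses.
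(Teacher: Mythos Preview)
The paper does not give a proof of this lemma at all; it is stated with the citation \cite[(24)]{CC73} and used as a black box. So there is nothing in the paper to compare your argument against.

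Your proof is correct. The ``only if'' direction is clean: the two diamond relations at $(i,m)$ and $(i-1,m)$, reduced modulo $a_i$, give $f(i+1,m)\equiv -f(i-1,m+1)^{-1}$ and $f(i-1,m)\equiv f(i-1,m+1)^{-1}$, hence $a_{i-1}+a_{i+1}\equiv 0$. For the ``if'' direction, your key observations --- that \eqref{cond:flute} forces $\gcd(a_i,a_{i+1})=1$ by descent, and that \eqref{cond:flute} then propagates from one diagonal to the next --- are exactly what is needed, and the double induction (outer on $m$, inner on $i$) is set up correctly. In the inner induction you use that $b_{i-2}'$ and $b_{i-1}'$ are already known to be integers to make sense of the congruence $b_{i-1}b_{i-1}'\equiv 1\pmod{b_i}$; this is fine and worth making explicit. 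The symmetry $g(i,m)=f(n+1-i,-m)$ does satisfy \eqref{eqn:diamond} and its $0$th diagonal is the reversal of $(a_1,\dots,a_n)$, so the reduction to $m\ge 0$ is legitimate. The boundary cases $i=1$ and $i=n$ are indeed routine for the reasons you sketch.

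One small stylistic point: in verifying \eqref{cond:flute} for the new diagonal, you don't actually need to invoke inverses. From $b_{i+1}b_{i-1}'\equiv -1$ and $b_{i+1}b_{i+1}'\equiv 1\pmod{b_i'}$ you get $b_{i+1}(b_{i-1}'+b_{i+1}')\equiv 0$, and the first congruence already forces $\gcd(b_{i+1},b_i')=1$, so $b_i'\mid b_{i-1}'+b_{i+1}'$ follows directly.
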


In light of Lemma \ref{friezeFlute}, proving statement (1) of Theorem \ref{mainthm} reduces to proving the following proposition.

\begin{prop}\label{boundDiag}
    Let $n$ be a positive integer. The following statements hold.
    \begin{enumerate}[\normalfont(1)]
        \item If $(a_1,\ldots, a_n)$ satisfies  {\normalfont{(\ref{cond:flute})}}, then $a_i \leq F_{n+2}$ for all $i \in \{1,\ldots, n\}$.
        
        \item If $n$ is even, the $n$-tuple 
    $$\left(F_4, F_6,\dots, F_{n},  F_{n+2}, F_{n+1}, F_{n-1},\dots, F_5, F_3\right)$$
    satisfies {\normalfont{(\ref{cond:flute})}}.
    
        \item If $n$ is odd, the $n$-tuple 
     $$\left(F_4, F_6,\dots, F_{n+1},  F_{n+2}, F_{n}, F_{n-2},\dots, F_5, F_3\right)$$
     satisfies {\normalfont{(\ref{cond:flute})}}.
    \end{enumerate}
\end{prop}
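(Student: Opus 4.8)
The plan is to prove the three statements more or less independently, with part~(1) being the substantive one. For parts~(2) and~(3) the strategy is purely computational: I would verify that the exhibited tuples satisfy the divisibility condition~\eqref{cond:flute} by invoking standard Fibonacci identities. Writing the even-case tuple as $(a_1,\dots,a_n) = (F_4,F_6,\dots,F_n,F_{n+2},F_{n+1},F_{n-1},\dots,F_5,F_3)$, I would check the divisibility $a_i \mid a_{i-1}+a_{i+1}$ index by index, splitting into the "ascending-even-index" block, the two middle entries $F_{n+2}$ and $F_{n+1}$, the "descending-odd-index" block, and the two boundary entries where $a_0 = a_{n+1} = 1$. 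In each block consecutive entries differ by two in their Fibonacci index, so the key input is that $F_{k-2} + F_{k+2} = 3F_k$ (equivalently $L_2 F_k$ via the Lucas numbers), which makes the quotient equal to $3$ in the interior, while at the ends one uses $F_3 = 2 \mid 1 + F_4 = 3+1$ is false — so I will need to be careful: in fact $a_1 = F_4 = 3$ and $a_1 \mid a_0 + a_2 = 1 + F_6 = 1+8 = 9$, which works, and $a_n = F_3 = 2 \mid a_{n-1}+a_{n+1} = F_5 + 1 = 5+1 = 6$, which works. The junctions between the ascending block and $F_{n+2}$, and between $F_{n+1}$ and the descending block, also need $F_{n-1}+F_{n+1} = L_n$ type identities; all of these are routine once the right identity is named. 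Part~(3) is the same argument with the parity of the indices swapped.

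For part~(1), the natural approach is induction on $n$, but a naive induction does not obviously work because deleting $a_n$ from a flute tuple of length $n$ changes the boundary convention ($a_{n+1}$ was $1$, now $a_n$ plays that role but need not be $1$). The cleaner route is to prove a strengthened statement about tuples satisfying a one-sided version of~\eqref{cond:flute}. Concretely, I would show: if $a_0 = 1$, $a_1 \ge 1$, and $a_i \mid a_{i-1} + a_{i+1}$ for $i = 1,\dots,n$ (with $a_{n+1} = 1$), then writing $q_i := (a_{i-1}+a_{i+1})/a_i \in \Z_{>0}$, the sequence is governed by the recurrence $a_{i+1} = q_i a_i - a_{i-1}$ with all $q_i \ge 1$. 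The pair $(a_{i-1}, a_i)$ evolves by the matrix $\begin{pmatrix} 0 & 1 \\ -1 & q_i \end{pmatrix}$. Since each $q_i \ge 1$ and the entries stay positive, one shows by induction that $a_i \le F_{\text{something}}$: the worst case for maximizing a later entry given a fixed-length window with endpoints constrained to be small is to take all $q_i$ as small as possible, i.e. $q_i = 1$, which produces exactly the Fibonacci-like behavior, and the boundary condition $a_{n+1} = 1$ forces the descent back down.

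More precisely, I expect the right lemma to be: \emph{for a flute tuple of length $n$, $a_i + a_{i+1} \le F_{n+3}$ for all $i \in \{0,\dots,n\}$} (or some equivalent packaging of a "consecutive sum" bound), proved by a two-sided induction — run the recurrence forward from the left boundary $(a_0, a_1) = (1, a_1)$ and backward from the right boundary $(a_n, a_{n+1}) = (a_n, 1)$, and observe that consecutive sums $a_{i-1} + a_i$ can at most double at each step (since $a_{i+1} = q_i a_i - a_{i-1} \le \dots$), wait — that over-counts. The correct monotone quantity is subtler: I would track $a_{i-1} + a_i$ and note $a_i + a_{i+1} = (q_i - 1) a_i + (a_i - a_{i-1}) + a_i$; the honest statement is that the pair $(a_{i-1}, a_i)$, read as lying on or below the "Fibonacci staircase", stays there, using that $q_i = 1$ is the extremal choice and that any $q_i \ge 2$ forces $a_{i+1} \ge a_i$ which, combined with needing to come back down to $a_{n+1} = 1$, eats up index budget. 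The main obstacle is exactly this: making rigorous the intuition that "spending a large $q_i$ costs you length, so the global maximum is achieved by the all-ones sequence," since the constraint is two-sided (both $a_0$ and $a_{n+1}$ are pinned to $1$). I would handle it by proving separately that $a_i \le F_{i+2} \cdot (\text{right-tail length bound})$ — i.e. $a_i \le F_{i+2}$ is false in general, but $a_i \le \min(\text{forward Fibonacci bound}, \text{backward Fibonacci bound})$ with the product/balance giving $F_{n+2}$ — and then showing the two bounds multiply out correctly via the identity $F_{a+b+1} = F_{a+1}F_{b+1} + F_a F_b$. Packaging this identity against the two one-sided inductions is where the real work lies; everything else is bookkeeping.
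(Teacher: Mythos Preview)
Your treatment of parts~(2) and~(3) matches the paper's: verify the divisibility directly, with the interior checks reducing to $F_{k-2}+F_{k+2}=3F_k$ and the boundary and junction checks following from the Fibonacci recurrence.

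For part~(1), however, your proposal does not arrive at a working argument. You correctly note that deleting $a_n$ need not preserve condition~(\ref{cond:flute}), and you set up the quotients $q_i=(a_{i-1}+a_{i+1})/a_i$ and the recurrence $a_{i+1}=q_i a_i - a_{i-1}$, but from there the plan degenerates into a wish list (``I expect the right lemma to be\ldots'', ``the correct monotone quantity is subtler'', two one-sided bounds glued via $F_{a+b+1}=F_{a+1}F_{b+1}+F_aF_b$) rather than a proof. No forward bound of the shape $a_i\le F_{i+2}$ holds, as you yourself concede, and the ``product/balance'' mechanism that is supposed to convert two one-sided estimates into the global bound $F_{n+2}$ is never actually specified.

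The idea you are missing is the paper's key reduction lemma: if no $q_i$ equals $1$, then $a_{i-1}+a_{i+1}\ge 2a_i$ for every $i$, so the differences $a_{i+1}-a_i$ are nondecreasing; chaining from $a_0=1$ forces $1-a_n=a_{n+1}-a_n\ge a_1-a_0\ge 0$, hence $a_n=1$. Thus either $a_n=1$ (and $(a_1,\ldots,a_{n-1})$ is already a flute $(n-1)$-tuple), or some $q_i=1$, i.e.\ $a_i=a_{i-1}+a_{i+1}$. In the latter case deleting $a_i$ preserves~(\ref{cond:flute}), since $a_{i-1}\mid a_{i-2}+a_i$ together with $a_i=a_{i-1}+a_{i+1}$ gives $a_{i-1}\mid a_{i-2}+a_{i+1}$, and symmetrically for $a_{i+1}$. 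Now the induction on $n$ runs cleanly: all entries except $a_i$ are $\le F_{n+1}$, and applying the same dichotomy once more to the shortened tuple shows $a_{i-1}+a_{i+1}\le F_{n+1}+F_n=F_{n+2}$ (or $\le 2F_n\le F_{n+2}$ when the second deleted index is not adjacent to $i$).
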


In order to prove Proposition \ref{boundDiag}, we need the following lemma.

\begin{lem}\label{lem:reductioncases}
    If an $n$-tuple $(a_1, a_2,\dots,a_n)$ of positive integers satisfies {\normalfont{(\ref{cond:flute})}}, and $a_n\neq1$, then there exists $i\in\{1,\dots,n\}$ such that $a_i=a_{i-1}+a_{i+1}$.
\end{lem}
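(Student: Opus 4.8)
The plan is to establish the statement by contradiction: assume $a_i \ne a_{i-1}+a_{i+1}$ for \emph{every} $i \in \{1,\dots,n\}$, and deduce that $a_n = 1$.

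First I would convert the divisibility hypothesis into an inequality. Fix $i \in \{1,\dots,n\}$. Condition (\ref{cond:flute}) gives $a_i \mid a_{i-1}+a_{i+1}$; since every entry (including the conventional values $a_0 = a_{n+1} = 1$) is a positive integer, $a_{i-1}+a_{i+1}$ is a \emph{positive} multiple of $a_i$, and the standing assumption rules out that multiple being exactly $a_i$. Hence $a_{i-1}+a_{i+1} \ge 2a_i$ for all $i \in \{1,\dots,n\}$, which I rewrite as the ``discrete convexity'' inequality
\[
    a_{i+1}-a_i \;\ge\; a_i-a_{i-1}, \qquad i = 1,\dots,n.
\]

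Second I would exploit this convexity via telescoping. Setting $d_i := a_i - a_{i-1}$ for $i = 1,\dots,n+1$, the displayed inequalities say precisely that $d_1 \le d_2 \le \cdots \le d_{n+1}$. Since $a_1$ is a positive integer, $d_1 = a_1 - a_0 = a_1 - 1 \ge 0$, so monotonicity forces $d_i \ge 0$ for all $i$; equivalently, $a_0 \le a_1 \le \cdots \le a_n \le a_{n+1}$. Because $a_0 = a_{n+1} = 1$, this squeezes $1 = a_0 \le a_n \le a_{n+1} = 1$, so $a_n = 1$, contradicting the hypothesis $a_n \ne 1$. This finishes the argument.

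I do not anticipate a real obstacle here: the whole content is the one-line reformulation of (\ref{cond:flute}) as $d_{i+1} \ge d_i$, after which the two boundary values $a_0 = a_{n+1} = 1$ do all the work. The only points needing a little care are that the endpoint instances $i = 1$ and $i = n$ of (\ref{cond:flute}) are still available under the convention $a_0 = a_{n+1} = 1$, and the elementary observation that $d_1 \ge 0$ (which is what pins the monotone sequence to the value $1$ at both ends).
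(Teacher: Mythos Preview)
Your proposal is correct and essentially identical to the paper's own proof: both argue by contradiction, convert the divisibility condition (\ref{cond:flute}) under the negated hypothesis into $a_{i-1}+a_{i+1}\ge 2a_i$, rewrite this as monotonicity of the differences $a_{i+1}-a_i$, and then use $a_0=a_{n+1}=1$ to force $a_n=1$. The only cosmetic difference is your introduction of the notation $d_i$ for the differences.
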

\begin{proof}
    If there is no such $i$, the condition {\normalfont{(\ref{cond:flute})}} gives us that $a_{i-1}+a_{i+1}\geq 2a_i$ for $i=1,\ldots, n$. Thus, we have
    $a_{i+1}-a_i=a_{i+1}+a_{i-1}-a_{i-1}-a_i\geq a_i - a_{i-1}$ for each $i$.
    Combining these inequalities, and recalling that $a_0=1$, we obtain 
    $$a_{n+1} - a_{n} \geq a_{n} -a_{n-1} \geq \cdots \geq a_2 - a_1 \geq a_1 - a_0 \geq 0.$$
    This implies $a_{n+1} - a_{n} = 1 - a_n \geq 0$, so $a_n=1$.
\end{proof}

We write $u_n \stackrel{\text{def}}{=} u_{A_n}$. The following lemma is the base case needed for part (1) of Proposition \ref{boundDiag}. 

\begin{lem}\label{lem:smolfrieze}
    We have $u_1 = 2$ and $u_2=3$.
\end{lem}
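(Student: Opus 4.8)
The plan is to unwind the definitions for the two smallest cases and enumerate the finitely many possible diagonals using Lemma \ref{friezeFlute}. For $n=1$, condition \eqref{cond:flute} reads: $a_1$ divides $a_0 + a_2 = 1 + 1 = 2$, since by convention $a_0 = a_2 = 1$. Hence $a_1 \in \{1, 2\}$, and both values are achievable (the diagonal $(1)$ gives the constant frieze, the diagonal $(2)$ gives a genuine frieze), so every entry of every frieze pattern of type $A_1$ lies in $\{1,2\}$ and the value $2$ is attained; therefore $u_1 = 2$.

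For $n=2$, condition \eqref{cond:flute} becomes the pair of divisibility requirements: $a_1 \mid a_0 + a_2 = 1 + a_2$ and $a_2 \mid a_1 + a_3 = a_1 + 1$. I would argue that any solution in positive integers has $\max(a_1,a_2) \le 3$. Without loss of generality suppose $a_1 \ge a_2$; from $a_1 \mid a_2 + 1$ and $a_2 + 1 \le a_1 + 1$ we get either $a_2 + 1 = a_1$ or $a_2 + 1 < a_1$ forces $a_2 + 1 = 0$, impossible, so in fact $a_1 \le a_2 + 1$, i.e. $a_1 \in \{a_2, a_2+1\}$. Feeding this back into $a_2 \mid a_1 + 1 \in \{a_2+1, a_2+2\}$ gives $a_2 \mid 1$ or $a_2 \mid 2$, hence $a_2 \in \{1,2\}$ and correspondingly $a_1 \le 3$. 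One then checks the tuple $(3,2)$ (or $(2,3)$) indeed satisfies \eqref{cond:flute}, so $3$ is attained (consistent with Example \ref{ex:friezeA5}, where $3$ appears but $F_4 = 3$ is the max for small width); therefore $u_2 = 3$.

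I do not anticipate a genuine obstacle here: both statements are finite checks, and the only mild subtlety is being careful with the boundary convention $a_0 = a_{n+1} = 1$ and with the reduction "assume $a_1 \ge a_2$" in the $n=2$ case, which uses the symmetry of \eqref{cond:flute} under reversing the tuple. Since $F_3 = 2$ and $F_4 = 3$, these values match $u_n = F_{n+2}$ for $n = 1, 2$, providing the base cases for the inductive proof of part (1) of Proposition \ref{boundDiag}.
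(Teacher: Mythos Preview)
Your proof is correct and follows essentially the same approach as the paper: reduce to enumerating the finitely many diagonals via Lemma~\ref{friezeFlute} and exhibit a witness achieving the bound. The only minor difference is that for $n=2$ the paper invokes Lemma~\ref{lem:reductioncases} (to conclude $a_1 = 1+a_2$ or $a_2 = 1+a_1$ when $a_2 \neq 1$) in place of your symmetry reduction $a_1 \geq a_2$, but both arguments amount to the same short finite check.
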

\begin{proof}
    When $n=1$, each diagonal of a frieze is a $1$-tuple $(a_1)$ such that $a_1$ divides $a_0 + a_2 = 1+1=2$. Hence $a_1= 1$ or $2$ and $u_1 \leq 2$. On the other hand, Figure \ref{fig:frieze-width1} implies that $u_1 \geq 2$, and so $u_1 =2$. 

    When $n=2$, each diagonal of a frieze is a pair of integers $(a_1,a_2)$ satisfying condition (\ref{cond:flute}). Applying Lemma \ref{lem:reductioncases}, if $a_2=1$, then $a_1$ is $1$ or $2$. When $a_2\neq1$, either $a_1 = 1+a_2$ or $a_2 = 1+a_1$. In the case of the former, $a_2$ divides $1+a_1= 2+a_2$ implies $a_2=2$ and $a_1=3$. A symmetric argument yields $a_1=2$ and $a_2=3$ in the latter case. Thus $u_2 \leq 3$. To conclude, note that Figure \ref{fig:frieze-width2} is a frieze pattern of width $2$ containing the number $3$. 
\end{proof}

\begin{figure}[ht]
    \centering
    \[
\begin{matrix}
    &1 && 1 && 1 && 1 && 1 &&  \\
    \cdots&& 1 && 2 && 1 && 2 && 1 &&\cdots \\
    &&& 1 && 1 && 1 && 1 && 1 
\end{matrix}
\]
    \caption{A frieze pattern of type $A_1$.}
    \label{fig:frieze-width1}
\end{figure}

\begin{figure}[ht]
    \centering
    \[
\begin{matrix}
    1 && 1 && 1 && 1 && 1 && 1 \\
    & 1 && 3 && 1 && 2 && 2 && 1  \\
   \cdots && 2 && 2 && 1 && 3 && 1 && 2 && \cdots  \\
    &&&1 && 1 && 1 && 1 && 1 && 1
\end{matrix}
\]
    \caption{A frieze pattern of type $A_2$.}
    \label{fig:frieze-width2}
\end{figure}

\begin{proof}[Proof of Proposition \ref{boundDiag}.]
    We prove (1) by induction on $n$. The base cases $n=1$ and $n=2$ are proved in Lemma \ref{lem:smolfrieze}.
    Fix $n \geq 3$, and 
    suppose our claim holds for all $m < n$. Consider an 
    arbitrary $n$-tuple $(a_1,\ldots, a_n)$ satisfying {\normalfont{(\ref{cond:flute})}}. If $a_n = 1$, one sees that $(a_1,\ldots, a_{n-1})$ is an $(n-1)$-tuple satisfying {\normalfont{(\ref{cond:flute})}}, so by the inductive hypothesis, $a_i \leq F_{n+1} \leq F_{n+2}$ for all $i \in \{1,\ldots, n\}$, and we are done. If $a_n \neq 1$, there exists by 
    Lemma \ref{lem:reductioncases} an index $i \in \{1,\ldots , n\}$ such that $a_i = a_{i-1}+a_{i+1}$. In this case, $a_{i-1}$ divides $a_{i-2}+a_i$ and $a_i = a_{i-1} + a_{i+1}$, so $a_{i-1}$ divides $a_{i-2}+a_{i+1}$. Similarly, $a_{i+1}$ divides $a_{i-1}+a_{i+2}$. In other words the $(n-1)$-tuple 
    \begin{equation}\label{eq:(n-1)diag}
    (a_1,\ldots, a_{i-1},\widehat{a}_i, a_{i+1}, \ldots, a_n)
    \end{equation}
    satisfies {\normalfont{(\ref{cond:flute})}} --- here $\widehat{\cdot}$ denotes omission. By induction, $a_k \leq F_{n+1} \leq F_{n+2}$ for all $k \in \{1,\ldots , n\} \backslash \{i\}$. It now remains to show that $a_i \leq F_{n+2}$. Applying the above case analysis to the $(n-1)$-tuple given in \eqref{eq:(n-1)diag}, we  conclude that there exists an index $j \in \{1,\ldots, n\} \backslash \{i\}$ such that the 
    $(n-2)$-tuple 
    \[
    (a_1,\ldots,\widehat{a}_i, \ldots, \widehat{a}_j,\ldots,  a_n)
    \]
    satisfies {\normalfont{(\ref{cond:flute})}}, and so $a_k \leq F_n$ for all $k\in \{1,\dots,n\}\setminus \{i,j\}$. If $j = i+1$ or $i-1$, then $$a_i = a_{i-1} + a_{i+1} \leq F_{n+1} + F_n = F_{n+2}.$$ Otherwise, $a_i \leq 2 F_n \leq F_{n+2}$. In either case, $a_i \leq F_{n+2}$, completing the proof of (1).

    The proofs of (2) and (3) are a straightforward 
    computation. The key is to notice that away from the middle of the tuple, we have that
    $$F_{i-2} + F_{i+2} = F_{i-2} + F_{i} + F_{i+1} = F_{i-2} + 2F_{i} + F_{i-1} = 3F_i$$ is divisible by $F_i$. Near the middle of the tuple, the condition (\ref{cond:flute}) is immediate.
\end{proof}

\begin{rem}\label{eq:zigzag}
    The diagonals constructed in 2) and 3) of Proposition \ref{boundDiag} give rise, via Lemma \ref{friezeFlute}, to frieze patterns 
    ``corresponding to the zig-zag triangulation of the $(n+3)$-gon'' in the terminology of \cite{CC73}, also called the ``snake" triangulation in \cite[figure 13]{FZ:II}. A zig-zag triangulation of the octogon is given below.

\begin{center}
\begin{tikzpicture}
    \node[draw, minimum size=2cm, regular polygon, regular polygon sides=8] (octagon) {};
    \draw (octagon.corner 1) -- (octagon.corner 3);
    \draw (octagon.corner 3) -- (octagon.corner 8);
    \draw (octagon.corner 4) -- (octagon.corner 8);
    \draw (octagon.corner 4) -- (octagon.corner 7);
    \draw (octagon.corner 7) -- (octagon.corner 5);
\end{tikzpicture}
\end{center}
\end{rem}

\subsection{The case of type $C_n$}\label{ss:proof2}
We rewrite Definition \ref{def:frieze} in the case of type $C_n$.  
\begin{defn}\label{def:friezeCn}
    A \emph{frieze pattern of type $C_n$} is a map $f:\{1,\dots,n\}\times \Z \to \Z_{>0}$ such that for all $m\in \Z$, we have 
    \begin{equation}\label{eqn:Cn-rule}
    f(i,m)f(i,m+1) = \begin{cases}
        1 + f(i+1,m)f(i-1,m+1) & \quad \text{ if } i = 1, \ldots , n-1, \\
        1 + (f(n-1,m+1))^2 & \quad \text{ if } i =n,
    \end{cases}
    \end{equation}
    where by convention $f(0,m):=1$ for all $m\in \Z$.
\end{defn}
The following lemma is a phenomenon of ``folding" from friezes of type $A_{2n-1}$ to friezes of type $C_n$; a geometric version can be found in \cite[Theorem 4.2]{FP:Dn}.
\begin{lem}\label{l:foldingAnCn}
Let $a \in {\rm Frieze}(A_{2n-1})$, and suppose that for all $m \in \Z$,
\begin{equation}\label{eq:an-to-cn1}
    a(n+j,m) = a(n-j,m+j), \quad j \in \{1,\ldots ,n-1\}.
\end{equation}
Then the function $c : \{1,\ldots, n\} \times \Z \longrightarrow \Z_{>0}$ given by 
\begin{equation}\label{eq:an-to-cn2}
    c(n-i,m) = a(n-i,-m+i), \quad i \in \{0,\ldots, n-1\}, m \in \Z,
\end{equation}
is a frieze pattern of type $C_n$. Moreover, every frieze pattern of type $C_n$ arises in this way.
\end{lem}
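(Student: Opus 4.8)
The plan is to prove Lemma \ref{l:foldingAnCn} by a direct verification in both directions, treating separately the ``interior'' indices $i \in \{1,\ldots,n-1\}$ and the ``boundary'' index $i = n$, and then establishing surjectivity by running the construction in reverse. First I would fix $a \in {\rm Frieze}(A_{2n-1})$ satisfying the symmetry relation \eqref{eq:an-to-cn1}, define $c$ by \eqref{eq:an-to-cn2}, and check that $c$ satisfies \eqref{eqn:Cn-rule}. For $i \in \{1,\ldots,n-1\}$ this is a matter of substituting \eqref{eq:an-to-cn2} into the diamond rule \eqref{eqn:diamond} for $a$ at position $(n-i, -m+i)$ and matching indices; the convention $c(0,m) = 1$ corresponds to $a(0,m) = 1$ in the type $A_{2n-1}$ frieze, so the edge case is automatic. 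The genuinely different computation is at $i = n$: here I would write the diamond rule for $a$ at position $(n, -m)$, which reads $a(n,-m)a(n,-m+1) = 1 + a(n+1,-m)a(n-1,-m+1)$, and then use the symmetry relation \eqref{eq:an-to-cn1} with $j = 1$ to rewrite $a(n+1,-m) = a(n-1,-m+1)$, turning the right-hand side into $1 + (a(n-1,-m+1))^2$. Translating back through \eqref{eq:an-to-cn2} this is exactly $c(n,m)c(n,m+1) = 1 + (c(n-1,m+1))^2$, as required. Throughout, one must check that all quantities remain positive integers, but this is inherited from $a$ being a bona fide frieze pattern of type $A_{2n-1}$.

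For the converse --- that every frieze pattern of type $C_n$ arises this way --- I would start from an arbitrary $c \in {\rm Frieze}(C_n)$ and reconstruct $a$ on the strip $\{1,\ldots,2n-1\} \times \Z$. On indices $\{1,\ldots,n\}$ I set $a(n-i,-m+i) := c(n-i,m)$, i.e. invert \eqref{eq:an-to-cn2}; equivalently $a(k,\ell) := c(k, n-k-\ell)$ for $k \le n$, and one checks this is well-defined and single-valued. On indices $\{n+1,\ldots,2n-1\}$ I am \emph{forced} to define $a(n+j,m) := a(n-j,m+j) = c(n-j, n-(n-j)-(m+j)) = c(n-j, -m)$, so the symmetry \eqref{eq:an-to-cn1} holds by construction. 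The content is then to verify that this $a$ satisfies the type $A_{2n-1}$ diamond rule \eqref{eqn:diamond} at every position. For positions with first coordinate $\le n-1$ this is the interior computation above run backwards; for the position with first coordinate exactly $n$, the diamond rule for $a$ reduces --- using the just-established symmetry --- to the boundary rule \eqref{eqn:Cn-rule} for $c$ at $i=n$; and for positions with first coordinate $\ge n+1$, the diamond rule for $a$ is the \emph{image} under $j \mapsto j$ of a diamond rule at a smaller index, so it follows from the symmetry relation together with the already-verified equations. Concretely, the diamond at $(n+j,m)$ for $a$ is obtained by applying the substitution $m \mapsto m+j$ to the diamond at $(n-j, m+j)$, which holds because $c$ is a frieze; one should double-check the staggering of the $\pm 1$ shifts in the second coordinate matches up, which is the one place an off-by-one slip could occur.

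The main obstacle I anticipate is purely bookkeeping: keeping the index shifts in \eqref{eq:an-to-cn1} and \eqref{eq:an-to-cn2} straight, since the folding map reverses the direction of the second coordinate (the $-m$ versus $+m$) and simultaneously shifts by the ``depth'' $i$ or $j$. A clean way to manage this is to introduce, once and for all, the change of variables $a(k,\ell) = c(k, n-k-\ell)$ for $1 \le k \le n$ and $a(n+j,\ell) = c(n-j, \ell')$ for a suitable $\ell'$, verify that the two formulas agree on the overlap (they must, since $c$ is only defined for first coordinate $\le n$ and the overlap is the single column $k=n$, $j=0$), and then reduce every instance of \eqref{eqn:diamond} to an instance of \eqref{eqn:Cn-rule} mechanically. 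I do not expect any deep difficulty --- folding is a formal phenomenon here --- so I would keep the exposition terse, perhaps displaying only the $i=n$ (boundary) diamond in full and dispatching the interior and mirrored diamonds with a sentence each, referring to \cite[Theorem 4.2]{FP:Dn} for the geometric picture.
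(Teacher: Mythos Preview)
Your approach is essentially identical to the paper's: both directions are verified by a case-by-case check on the first index (interior $\{1,\ldots,n-1\}$, the boundary row $n$, and for the converse the mirrored indices $\ge n+1$ reduced via \eqref{eq:an-to-cn1}), with the symmetry relation at $j=1$ being exactly what converts the $A_{2n-1}$ diamond at row $n$ into the $C_n$ boundary rule. The only slip is precisely the off-by-one you anticipated: since $c(n,m+1) = a(n,-m-1)$, the relevant diamond for $a$ has second coordinate $-m-1$ rather than $-m$ (giving $a(n,-m-1)a(n,-m) = 1 + a(n+1,-m-1)a(n-1,-m)$ and hence $1 + c(n-1,m+1)^2$ after applying \eqref{eq:an-to-cn1}), but with this correction your computation matches the paper's line for line.
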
 
\begin{proof}
    To prove the first claim, it is sufficient to show that the function $c$ thus defined satisfies \eqref{eqn:Cn-rule} for all indices. Let $m \in \Z$ be arbitrary. We proceed by a case-by-case check on the index $i$. 

    {\bf Case 1:} Suppose that $i \in \{1,\ldots, n-1\}$. Set $k = n-i$, so that $k \in \{1,\ldots, n-1\}$.  
    \begin{align*}
    c(k,m) \; c(k,m+1) &= a(k,-m+i)\;  a(k, -(m+1)+i) \\
    &= 1 + a(k+1,-(m+1)+i) \; a(k-1,-m+i) \\
    &= 1 + a(n-(i-1),-m+(i-1)) \, a (n-(i+1),-(m+1)+(i+1)) \\
    & = 1 + c (k+1,m)\; c(k-1,m+1).
    \end{align*}

    {\bf Case 2:} Suppose that $i = 0$. Then 
    \begin{align*}
        c (n,m) \, c(n,m+1) &= a(n,-m) \, a (n, -m-1) \\
        &= 1 + a (n+1,-m-1) \, a(n-1,-m) \\
        &= 1 + a(n+1,-m-1) \, a (n-1,-m-1 +1) \\
        &= 1 + a(n-1,-(m+1)+1)^2, \\
        &= 1 + c(n-1,m+1)^2,
    \end{align*}
    where in the penultimate line we have used the assumption  in the statement of the Lemma. This concludes the proof that every $A_{2n-1}$ frieze pattern gives rise to a $C_n$ frieze. To see that the reverse hold, let $c \in {\rm Frieze}(C_n)$, and define 
    \[
        a : \{1,\ldots, 2n-1\} \times \Z \longrightarrow \Z_{>0},
    \]
    via the conditions \eqref{eq:an-to-cn1} and \eqref{eq:an-to-cn2}. To show that the resulting function satisfies \eqref{eqn:diamond}, we again proceed by cases on $i \in \{1,\ldots, 2n-1\}$. 
    
    {\bf Case 1:} $i \in \{1,\ldots, n-1\}$. Set $k = n-i \in \{1,\ldots, n-1\}$. Then,
    \begin{align*}
        a (i,m) \, a(i,m+1) &= a (n-k,m) \, a(n-k,m+1) \\
        &=  a (n-k,-(-m+k)+k) \, a(n-k,-(-m-1+k)+k) \\
        &=  c (n-k,-m+k) \, c(n-k,-m+k-1)\\
        &= 1 + c(n-k+1,-m+k-1) \, c(n-k-1,-m+k)\\
        &= 1 + c (n - (k-1), -m+k-1) \, c(n - (k+1),-m+k) \\
        &= 1 + a (i+1,m) \, a (i-1,m+1).
    \end{align*}
    
    {\bf Case 2:} $i = n$. Then,
    \begin{align*}
        a (n,m) \, a(n,m+1) &= c (n,-m) \, c(n,-m-1) \\
        &= 1 + c(n-1,-m)^2 \\
        &= 1 + a (n-1,m+1)^2 \\
        &= 1 + a (n+1,m)\, a (n-1,m+1).
    \end{align*}

    The remaining case $i \in \{n+1,\ldots, 2n-1\}$ reduces to the first case via \eqref{eq:an-to-cn1}, and we omit the details.
\end{proof}
\begin{ex}\label{ex:foldedC3}
The following is the frieze pattern of type $C_3$ obtained from the frieze of type $A_5$ given in Example \ref{ex:friezeA5} via Lemma \ref{l:foldingAnCn}. 
    \[
\begin{matrix}
    1 && 1 && 1 && 1 && 1 && 1 && 1  \\
    & 3 && 3 && 2 && 1 && 3 && 3  && 2 \\
   \cdots && 8 && 5 && 1 && 2 && 8 && 5 && 1 && \cdots  \\
    &&& 13 && 2 && 1 && 5 && 13 && 2 && 1  \\
    &&&&1 && 1 && 1 && 1 && 1 && 1 && 1
\end{matrix}
\]

\end{ex}

We now describe a more geometric incarnation of Lemma \ref{l:foldingAnCn}, as given in \cite{FP:Dn}. To do so, we recall the well-known realisation of frieze patterns of type $A_n$ in terms of polygons (c.f. \cite[\S 4.2]{Sophie-M:survey} or \cite[\S 12.2]{FZ:II}). 

Consider the regular polygon $P_{n+3}$ with $(n+3)$ vertices (labelled cyclically by $\{1,\ldots, n+3\}$) and $(n+3)$ edges. We call a {\it diagonal} any pair $\{i,j\}$ of non-consecutive vertices. Each diagonal can be uniquely written as an ordered pair $(i,j)$ with $i < j$. We say that two diagonals $(i_1,i_2)$ and $(j_1,j_2)$ {\it intersect} if $i_1= j_1$ and $i_2 = j_2$, or if
\[
    i_1<j_1<i_2<j_2, \text{ or } j_1<i_1<j_2<i_2.
\]

Otherwise, the two diagonals are said to be {\it non-intersecting}. A {\it triangulation} of $P_{n+3}$ is any maximal (w.r.t. inclusion) collection of (pairwise) non-intersecting diagonals. Each triangulation contains exactly $n$ diagonals. By \cite{CC73}, there is a one-to-one correspondence between frieze patterns of type $A_n$ and triangulations of $P_{n+3}$ (see Remark \ref{eq:zigzag} for an example of a triangulation).

According to the proof of \cite[Theorem 4.2]{FP:Dn}, frieze patterns of type $C_n$ are in one-to-one correspondence (using the algebraic procedure from Lemma \ref{l:foldingAnCn}) with frieze patterns of type $A_{2n-1}$ coming from triangulations of $P_{2n+2}$ that are symmetric under central symmetry, i.e. the symmetry which sends the vertex $i$ to the vertex $i + n+1$. 

{\it Proof of (2) of Theorem \ref{mainthm}.}
By Lemma \ref{l:foldingAnCn} and statement (1) of Theorem \ref{mainthm}, we have $u_{C_n} \leq u_{A_{2n-1}} = F_{2n+1}$. It remains to show that there exists a frieze pattern of type $C_n$ that realises this upper bound. By Remark \ref{eq:zigzag} and the discussion following Lemma \ref{l:foldingAnCn}, it is sufficient to show that the zig-zag triangulation of $P_{2n+2}$ is symmetric under the symmetry sending the vertex $i$ to the vertex $i + n+1$. One checks directly that diagonal of the zig-zag triangulation is of the form $(2+i,2n+2-i)$ for $i = 0, \ldots, n-2$ or $(2+i+1,2n+2-i)$ for $i = 0, \ldots, n-3$, and that the set of diagonals in this triangulation is indeed preserved. 
The resulting frieze pattern of type $C_n$, denoted $c$, satisfies $c(i,0) = F_{2i+1}$ for $i \in \{1,\ldots, n\}$ (see Figure \ref{ex:foldedC3} for the case $n= 3$). \qed

\textit{Acknowledgements.} We thank Aksel Chan, Bock-Man Cheung and Eaton Liu for their  contributions to formalising this result in the Lean4 proof assistant. The first author is supported by the Hong Kong PhD Fellowship Scheme (HKPFS) and the HKU Presidential PhD Scholar Programme (HKU-PS). The second author was partially supported by the New Cornerstone Science Foundation through the New Cornerstone Investigator Program awarded to Professor Xuhua He.

\bibliographystyle{alpha}
\bibliography{citelist}
\end{document}